\def\NAT@def@citea{\def\@citea{\NAT@separator}}
\theoremstyle{plain}
\newtheorem{theorem}{Theorem}[section]
\newtheorem{lemma}[theorem]{Lemma}
\newtheorem{proposition}[theorem]{Proposition}
\theoremstyle{definition}
\newtheorem{definition}[theorem]{Definition}
\newtheorem{assumption}[theorem]{Assumption}
\theoremstyle{remark}
\newtheorem{remark}{Remark}
\DeclareMathOperator*{\argmin}{\arg\min}
\begin{document}


\title{Convergence Conditions for Stochastic Line Search Based Optimization of Over-parametrized Models}

\author{
\name{Matteo Lapucci\textsuperscript{1}, Davide Pucci\textsuperscript{1,\Letter}\thanks{\Letter \, Contact D.\ Pucci. Email: davide.pucci@unifi.it}}
\affil{\textsuperscript{1} Department of Information Engineering, University of Florence, Via di Santa Marta 3, Firenze, 50139, Italy}
}

\maketitle

\begin{abstract}
In this paper, we deal with algorithms to solve the finite-sum problems related to fitting over-parametrized models, that typically satisfy the interpolation condition. In particular, we focus on approaches based on stochastic line searches and employing general search directions. We define conditions on the sequence of search directions that guarantee finite termination and bounds for the backtracking procedure. Moreover, we shed light on the additional property of directions needed to prove fast (linear) convergence of the general class of algorithms when applied to PL functions in the interpolation regime. From the point of view of algorithms design, the proposed analysis identifies safeguarding conditions that could be employed in relevant algorithmic frameworks. In particular, it could be of interest to integrate stochastic line searches within momentum, conjugate gradient or adaptive preconditioning methods.
\end{abstract}

\begin{keywords}
Finite-sum optimization, Interpolation condition, Stochastic line search, Convergence analysis, Stochastic gradient related
\end{keywords}

\section{Introduction}

\label{sec:intro}

In this work we are interested in the finite-sum minimization problem
\begin{equation} \label{eq:fin_sum_problem}
	\min_{x \in \mathbb{R}^n} f(x) = \frac{1}{N} \sum_{i=1}^{N} f_i(x)
\end{equation}
where $f_i: \mathbb{R}^n \rightarrow \mathbb{R}$ are differentiable, possibly nonconvex functions for all $i \in \{1, \dots, N\}$ and $N$ is a very large, yet finite number. Supervised learning tasks are, in essence, instances of problem \eqref{eq:fin_sum_problem}; the success and great diffusion of deep-learning applications has thus driven loads of interest to the development of tailored algorithms for this class of problems.  Effective approaches in this setting primarily rely on stochastic gradient descent (SGD) \cite{robbins1951} and its accelerated variants \cite{polyak1964, nesterov1983}; incremental gradient methods indeed take advantage of low per-iteration cost and data redundancy to quickly get close to decently accurate solutions, without giving up asymptotic convergence guarantees (even though in expectation); we refer the reader to the major survey \cite{bottou2018} for a thorough discussion on stochastic gradient methods and their application in data science contexts.

Then, over the past decade, adaptive approaches \cite{duchi2011, tieleman2012, zeiler2012, kingma2014adam} have emerged as consistent boosts for the standard SGD framework. In particular, Adam algorithm \cite{kingma2014adam} is now largely considered the safest choice in highly nonconvex scenarios. At this point, however, a substantial gap between theoretical knowledge and observed behaviors is often underlined; a full understanding about why SGD-type algorithms and Adam in particular work so well in nonconvex settings has long been lacking.

In the meantime, variance-reduced approaches \cite{schmidt2017, rie2013, gower2023} have been proposed as alternative SGD methods achieving faster convergence rates than the base algorithm and in line with standard gradient descent. However, these methods demand a significant overhead of additional computational resources, either memory or exact gradient evaluations, making them impractical for use with complex architectures or very large datasets. In fact, no speed-up seems to be obtainable w.r.t.\ SGD in nonconvex deep learning tasks \cite{defazio2019}.

Finally, in recent years, novel analyses of stochastic gradient methods have been carried out specifically in the interpolation regime, under assumptions that appear to be realistic in deep learning tasks.  Under these hypotheses, convergence rates matching those of ``full-batch'' methods were finally shown to hold for SGD \cite{ma2018, vaswani2020}. 

This novel view also allowed to devise reasoned techniques to select the step size with fast convergence guarantees in an adaptive manner \cite{mutschler2020, loizou2021}. 
A particularly relevant recent development concerns the employment of line searches in the stochastic setting. An Armijo-type line search, extending the classical Armijo rule to the finite-sum setup, was successfully defined \cite{vaswani2019}; more recently, a nonmonotone version of the stochastic line search was proposed \cite{galli2023}. In both cases, the convergence rate of standard gradient descent is retrieved under suitable assumptions.

Stochastic line search approaches under interpolation, however, have mainly been studied - and implemented - assuming an unbiased estimator of the negative gradients is employed as a search direction. This is in contrast with the study of classical nonlinear optimization algorithms, where the pool of possible search directions is vast. To the best of our knowledge, the only attempt to combine stochastic line searches with different directions than the stochastic gradient comes from \cite{fan2023}: here, a suitable adjustment is proposed for the line search condition to work with any direction of descent for the current stochastic objective function; yet, the descent condition is not enough to guarantee convergence for the overall algorithmic scheme.

In this manuscript, we address this issue. Firstly, we define conditions that guarantee not only the finite termination of the line search, but also suitable bounds on the obtained step size and on the number of backtrack steps. Then, the overall linear convergence result under interpolation is proved with an additional assumption, that does not require the search direction to be obtained as an uncorrelated perturbation of the stochastic gradient.

In addition to the theoretical value of the analysis, the results in the paper allow to additionally identify conditions to be used in algorithmic safeguards, so that popular algorithms like momentum-type methods or even Adam can be globalized by means of restart or corrections strategies \cite{powell1977restart,chan2022nonlinear,fan2023,lapucci2024globally}.

The rest of the paper is organized as follows: in Section \ref{sec:prelim} we review the main concepts and state-of-the art methods for the optimization of over-parametrized learning models, with particular emphasis on those based on stochastic line searches. In Section \ref{sec:prelim_discussion} we rigorously formalize the setting for the subsequent analyses. Then, in Section \ref{sec:line_search}, we define the first set of conditions on the directions, which allows to retrieve the main properties of the line search procedure. Finally, the convergence of the overall framework is discussed in Section \ref{sec:full_ana}: linear convergence is proved, under assumptions that are reasonable with over-parametrized learning problems, and only assuming an additional requirement for the search directions which is not restrictive on the structure of the direction itself.
We finally give some concluding remark in Section \ref{sec:conc}.

\section{Preliminaries and Related Works}
\label{sec:prelim}
The finite-sum problem \eqref{eq:fin_sum_problem} is usually handled through \textit{incremental} (or \textit{stochastic}) \textit{algorithms} like stochastic gradient descent. The fundamental idea behind this class of methods is to consider, at each iteration, a cheap approximation of the gradient $\nabla f(x)$; the evaluation of exact derivatives is indeed expensive, as $N$ is assumed to possibly be very large. Bottou et al.\ \cite{bottou2018} offer a detailed presentation of this family of algorithms when an unbiased estimator $g_k(x^k)$ of $\nabla f(x^k)$ is available (i.e., $\mathbb{E}_k[g_k(x^k)] = \nabla f(x^k)$), or when a stochastic Newton or quasi-Newton direction is considered. Here and for the rest of the manuscript, $\mathbb{E}_k[\cdot]$ represents the conditional expectation w.r.t.
$x^k$, i.e., $\mathbb{E}_k[\cdot]=\mathbb{E}[\cdot| x^k]$, whereas $\mathbb{E}[\cdot]$ denotes the total expectation. 
The SGD framework discussed in \cite{bottou2018} covers directions of the form 
\begin{equation*}
	d_k = -H_kg_k(x^k),\qquad g_k(x^k) = \dfrac{1}{|B_k|}\sum\limits_{i \in B_k} \nabla f_i (x^k)
\end{equation*}
where, for all $k \in \mathbb{N}$, $B_k \subseteq \{1, \dots, N\}$ and $H_k$ is a symmetric positive definite matrix conditionally uncorrelated with $g_k$. Direction $d_k$ is employed to update the current solution by the usual update rule
\begin{equation}
	\label{eq:iterative_scheme}
	x^{k+1} = x^k + \alpha_k d_k,
\end{equation}
where $\alpha_k$ is the step size (often referred to as learning rate by the artificial intelligence community). Note that setting $H_k=I$ we recover the usual mini-batch gradient descent, which in turn clearly collapses to base SGD if $|B_k| = 1$. 
When a suitable decreasing sequence of step sizes $\{\alpha_k\}$ is employed (e.g., if $\sum_{k=1}^{\infty} \alpha_k = \infty$ and $\sum_{k=1}^{\infty} \alpha_k^2 < \infty$), it is possible to guarantee that the sequence generated by algorithm \eqref{eq:iterative_scheme} satisfies $\liminf_{k \rightarrow \infty} \mathbb{E}[||\nabla f (x^k)||] = 0$, i.e., there is at least a subsequence of solutions approaching stationarity in expectation. We again refer the reader to \cite{bottou2018} for further details on these results.

Furthermore, in the nonconvex scenario, a worst-case complexity bound of $\mathcal{O}(\frac{1}{\epsilon^4})$ can be proved \cite{ghadimi2013}, which is in fact a tight bound \cite{arjevani2023lower}. Better complexity bounds of $\mathcal{O}(\frac{1}{\epsilon^2})$ and $\mathcal{O}(\frac{1}{\epsilon})$ can be achieved under convexity and strong convexity assumptions, respectively.
These results prove that, in general, SGD methods are not capable of obtaining the rates of convergence typical of full-batch methods. This observation, however, is heavily in contrast with the results observed when SGD is employed to train deep machine learning models.

A recent development in the analysis of SGD is contributing to fill this theoretical gap, based on the observation that modern machine learning models are usually expressive enough to fit any data point in the training set \cite{liang2020,ma2018}. The so-called \textit{interpolation condition} states that, given $x^* \in \arg \min_{x \in \mathbb{R}^n} f(x)$, then $x^* \in \arg \min_{x \in \mathbb{R}^n} f_i(x)$ for all $i \in \{1,\dots, N\}$.  
Moreover, the actual shape of the objective function in deep learning tasks has been investigated \cite{liu2022}: the landscape has been observed not to be convex - even locally - whereas another property appears to hold in large portions of the space: the \textit{Polyak-Lojasiewicz} (PL) condition \cite{polyak1987introduction}. A function $f$ satisfies the PL condition if $\exists \mu > 0$ such that $2 \mu (f(x) - f^*) \leq ||\nabla f(x)||^2$ for all $x \in \mathbb{R}^n$.
Taking into account interpolation, it was finally possible to match gradient descent complexity with SGD methods: $\mathcal{O}(\frac{1}{\epsilon})$ in the convex case and a linear rate either under strong convexity or, most importantly, PL assumptions \cite{vaswani2019, loizou2021, galli2023}.

These findings opened the way to a new line of research that aims at improving SGD performance in both the convex and nonconvex settings. To better discuss this scenario, let us now define $f_k$\footnote{We make here a slight abuse of notation: by $f_k$ we will refer to estimators of $f$, whereas $f_i$ denotes a term in the finite-sum.} and $g_k$ as (cheap) unbiased estimators of $f$ and $\nabla f$, respectively, satisfying
$$\mathbb{E}_k[f_k(x)] = f(x),\qquad \mathbb{E}_k[g_k(x)] = \nabla f(x),\qquad g_k(x) = \nabla f_k(x)$$
for all $x\in\mathbb{R}^n$. These properties of course hold for any pair $(f_k,g_k)$ such that $f_k(x) = \frac{1}{|B_k|}\sum_{i \in {B_k}}f_i(x)$.
Interpolation implies a tighter bond between the progress in $f_{k}(x)$ and the expected progress of the true loss $f(x)$, allowing some SGD algorithmic frameworks to achieve the aforementioned fast rates of convergence.

Loizou et al.\ \cite{loizou2021} proposed an SGD method, with provably fast convergence under interpolation, based on a stochastic adaptation of the Polyak step size \cite{polyak1987} for guessing the most promising step size at each iteration. This approach is however sensitive to the upper bound imposed on stepsizes. 

A different path to achieve fast convergence thanks to a proper selection of the step size builds upon line search techniques. 
Line searches were introduced in the stochastic setting to select the appropriate step size for SGD at each iteration. Vaswani et al. \cite{vaswani2019}, in particular, introduced a suitable Armijo-type condition \cite{armijo1966} for the stochastic case that provably works under interpolation. The peculiarities of this approach are:
\begin{enumerate}[i.]
	\item the progress in the current stochastic objective approximation is measured; 
	\item the sufficient decrease term is based on the stochastic gradient direction.
\end{enumerate}
The cost of checking this condition is reasonable in large scale scenarios, as it only requires an additional evaluation of $f_k$ for each tested step size $\alpha$; a suitable step size $\alpha_k$ can thus be easily computed by a classical backtracking procedure; 
the stochastic line search (SLS) approach can then be shown to provide, in a finite number of backtracks, a step size within a suitable interval related to the Lipschitz constants of $\nabla f_k$. This preliminary result is then combined with interpolation or related assumptions to prove the nice convergence rates for SGD.

A strict decrease condition imposed looking at an approximate objective function might be a restrictive requirement, especially in highly nonlinear and nonconvex scenario. A nonmonotone relaxation to the Armijo-type condition has thus been proposed \cite{galli2023}, with the aim of both reducing the number of mini-batch function evaluations caused by repeated backtracks and accepting aggressive step sizes more frequently. 
The employment of the nonmonotone condition in line search based mini-batch GD is shown not to alter convergence speed. Moreover, exploiting a suitable initial guess step size, related to the stochastic Polyak step, the resulting algorithm, named PoNoS, proves to be computationally very efficient at solving nonconvex learning tasks. 

\medskip
In this work, we discuss theoretical results for optimization algorithms based on stochastic line searches with search directions of a general form. The algorithmic frameworks we are going to analyze are thus characterized by the following two main elements:
\begin{enumerate}[(a)]
	\item the update rule is given by 
	\begin{equation}
		\label{eq:gen_update rule}
		x^{k+1} =  x^k+\alpha_kd_k;
	\end{equation}
	\item the step size $\alpha_k$ is obtained according to
	\begin{equation}
		\label{eq:gen_sls_rule}
		\alpha_k= \max_{j=0,1,\ldots}\{\alpha_0^k\delta^j\mid f_{k}(x^k+\alpha_0^k\delta^j d_k)\le f_{k}(x^k) +\gamma \alpha_0^k\delta^j d_k^T\nabla f_k(x^k) \},
	\end{equation}
	so that it satisfies the general stochastic Armijo condition \cite{fan2023}
	\begin{equation}
		\label{eq:gen_sls_cond}
		f_{k} (x^k + \alpha_k d_k) \leq f_{k} (x^k) + \gamma \alpha_k d_k^T \nabla f_{k} (x^k).
	\end{equation}
\end{enumerate}
In other words, equation \eqref{eq:gen_sls_rule} defines the backtracking procedure, where the initial stepsize $\alpha_0^k$ is decreased by a factor $\delta$ for $j$ times, until the Armijo sufficient decrease condition \eqref{eq:gen_sls_cond} is satisfied. In \cite{fan2023}, it is argued that it is sufficient to somehow impose $d_k^T\nabla f_k(x^k)<0$ to guarantee the well-definiteness - i.e., finite termination - of the line search procedure; while this is certainly true, we will highlight that this condition is not sufficient to obtain convergence results for the whole algorithmic framework, and we will discuss this issue in detail.

\section{The Interpolation Setting}
\label{sec:prelim_discussion}
Before starting the analysis, we need to provide some rigorous definitions of key concepts for the subsequent discussion. First, function $f$ is said to be $L$-smooth if it is continuously differentiable with Lipschitz-continuous gradient, i.e., 
$$\|\nabla f(x)-\nabla f(y)\|\le L\|x-y\|\quad \forall\,x,y\in\mathbb{R}^n;$$
for any $L$-smooth function, the following property (often referred to as the \textit{descent lemma}) holds:
$$f(x) \leq f(y) + \nabla f(y)^T (x - y) + \frac{L}{2} ||x - y||^2,\quad\forall\,x,y\in\mathbb{R}^n.$$

From here onward, we will assume that $f$ is $L$-smooth, that any $f_k$ is $L_k$-smooth and, consequently, that $L\le L_\text{max} = \max_{k}L_k$. We will also assume that $f$ is bounded below and has a global minimizer $x^*$.

We then recall some property often reasonably assumed to hold in the interpolation setting. We start by formally characterizing interpolation itself.

\begin{definition}[{\cite[Def. 3-4]{mishkin2020interpolation}}]
	Let $f:\mathbb{R}^n\to\mathbb{R}$ be an L-smooth finite-sum function. We say that 
	\begin{itemize}
		\item $f$ satisfies the \textit{minimizer interpolation property} if, for all $x^*\in\argmin_x f(x)$  we have $x^*\in\argmin_x f_k(x)$ for any $k$; 
		\item $f$ satisfies the \textit{stationarity interpolation property} if, for all $x^*$ such that $\nabla f(x^*)=0$ we have $\nabla f_k(x^*)=0$ for any $k$. 
	\end{itemize}
\end{definition}
Minimizers interpolation implies stationarity interpolation at all global solutions, so the former property implies the other if $f$ is invex, i.e., if all stationary points are global solutions. The converse is true if all functions $f_k$ are also invex. Otherwise, in the general case, one condition does not imply the other. We next introduce a useful condition for analyzing over-parametrized scenarios.

\begin{definition}[{\cite[Th. 4]{polyak1963gradient}}]
	Let $f:\mathbb{R}^n\to\mathbb{R}$ be an $L$-smooth function and let $f^*$ be its minimum over $\mathbb{R}^n$. Function $f$ satisfies the \textit{Polyak-Lojasiewicz} (PL) condition if there exists $\mu > 0$ such that, for all $x \in \mathbb{R}^n$, we have
	\begin{equation*}
		2 \mu (f(x) - f^*) \leq ||\nabla f(x)||^2.
	\end{equation*}
\end{definition}
The PL inequality basically tells us that the function value does not increase faster than the (squared) size of the gradient as we move away from optimality. Similarly as strong convexity,
the PL property implies that every stationary point is a global solution to the minimization problem. Indeed, strong convexity implies the PL property, which in turn implies invexity.  However, differently than strongly convex functions, PL functions do not necessarily have a unique minimizer.
As already discussed in this manuscript, recent literature shows that variants of the PL condition can be reasonably assumed to study the optimization landscape associated with over-parametrized models \cite{liu2022}.

We then need assumptions to bound the growth rate of the variance of gradients size for individual stochastic functions $f_k$. Two types of conditions have been used in recent literature.
\begin{definition}[{\cite[Sec. 2]{schmidt2013fast}}]
	Function $f:\mathbb{R}^n\to\mathbb{R}$  satisfies the \textit{Strong Growth Condition} (SGC) if there exists $\rho>0$ such that, for any point $x\in\mathbb{R}^n$ and for any $k$,  
	\begin{equation*}
		\mathbb{E}_k[||\nabla f_k(x)||^2] \leq \rho ||\nabla f(x)||^2.
	\end{equation*}
\end{definition}

\begin{definition}[{\cite[Sec. 5]{vaswani2019fast}}]
	Let $f:\mathbb{R}^n\to\mathbb{R}$ be an L-smooth function and let $f^*$ be its minimum over $\mathbb{R}^n$. Function $f$ satisfies the \textit{Weak Growth Condition} (WGC) with constant $\rho$ if, for all $x\in\mathbb{R}^n$ and for any $k$, 
	\begin{equation*}
		\mathbb{E}_k[||\nabla f_k(x)||^2] \leq 2 \rho L [f(x) - f^*].
	\end{equation*} 
\end{definition}

It might be worth remarking that, since $\|\mathbb{E}_k[\nabla f_k(x)]\|^2 = \|\nabla f(x)\|^2\le \mathbb{E}_k[\|\nabla f_k(x)\|^2]$ by Jensen's inequality, actually the constant $\rho$ in the SGC necessarily has to be greater or equal to 1.

We shall now note that the three considered regularity properties are closely tied to each other and with the interpolation conditions. Firstly, it is immediate to observe that the SGC directly implies stationarity interpolation. Actually, the SGC also implies the WGC under $L$-smoothness assumptions \cite{mishkin2020interpolation}, which means the latter condition is indeed weaker than the former one. 

Stationarity interpolation is further satisfied by global minimizers under the WGC, whereas WGC is implied by minimizers interpolation (see \cite[Lemma 4]{galli2023}). Moreover, the following result holds.
\begin{lemma}[{\cite[Proposition 1]{vaswani2019fast}}]
	If $f$ is L-smooth, satisfies the WGC with constant $\rho$ and the PL condition with constant $\mu$, then it satisfies the SGC with constant $\frac{\rho L}{\mu}$. 
\end{lemma}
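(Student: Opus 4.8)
The plan is to chain the two hypotheses directly, with no additional machinery. The WGC bounds the quantity we wish to control, $\mathbb{E}_k[\|\nabla f_k(x)\|^2]$, from above by a multiple of the suboptimality gap $f(x)-f^*$; the PL condition, in turn, bounds that same gap from above by a multiple of $\|\nabla f(x)\|^2$. Composing the two bounds should produce exactly the SGC inequality, and tracking the constants should yield the claimed factor $\frac{\rho L}{\mu}$.

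Concretely, I would fix an arbitrary $x\in\mathbb{R}^n$ and an arbitrary index $k$, and first write the WGC as stated,
\[
\mathbb{E}_k[\|\nabla f_k(x)\|^2]\le 2\rho L\,[f(x)-f^*].
\]
Next I would rearrange the PL inequality $2\mu(f(x)-f^*)\le\|\nabla f(x)\|^2$ into the equivalent form $f(x)-f^*\le\frac{1}{2\mu}\|\nabla f(x)\|^2$, which is legitimate precisely because $\mu>0$. Substituting this upper bound on the gap into the right-hand side of the WGC gives
\[
\mathbb{E}_k[\|\nabla f_k(x)\|^2]\le 2\rho L\cdot\frac{1}{2\mu}\|\nabla f(x)\|^2=\frac{\rho L}{\mu}\|\nabla f(x)\|^2,
\]
which is exactly the SGC with constant $\frac{\rho L}{\mu}$. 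Since $x$ and $k$ were arbitrary, the bound holds for all $x\in\mathbb{R}^n$ and all $k$, completing the argument.

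I do not anticipate any real obstacle here: both hypotheses are pointwise inequalities valid at every $x$, so there are no domain- or compatibility-issues, and the only thing to verify is that the chain points in a consistent direction, which it does because $f(x)-f^*$ appears as an upper bound in the WGC and is itself controlled from above by PL. The single point worth a remark is a consistency check: for any $L$-smooth PL function one has $\mu\le L$ (minimizing the quadratic upper bound of the descent lemma yields $\frac{1}{2L}\|\nabla f(x)\|^2\le f(x)-f^*$, which combined with PL forces $\mu\le L$), so $L/\mu\ge 1$ and hence the derived constant satisfies $\frac{\rho L}{\mu}\ge\rho$; this is reassuring and aligns with the fact that the SGC is, in general, a weaker bound than the WGC–PL pair it is deduced from.
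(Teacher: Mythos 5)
Your proof is correct: this lemma is stated in the paper without proof (it is quoted from Proposition 1 of \cite{vaswani2019fast}), and your argument---rearranging the PL inequality into $f(x)-f^*\le\frac{1}{2\mu}\|\nabla f(x)\|^2$ and substituting it into the right-hand side of the WGC---is exactly the standard one-line chaining used in that reference, with the constants tracked correctly. Nothing is missing, and your closing consistency check that $\mu\le L$ forces $\frac{\rho L}{\mu}\ge\rho$ is a sound (if optional) sanity remark.
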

The above implication in the end tells us that, under PL assumptions, SGC and WGC are equivalent to each other, with minimizers interpolation being a stronger condition and stationarity interpolation a weaker one. These relationships are summarized in Figure \ref{fig:relationships_conditions}.

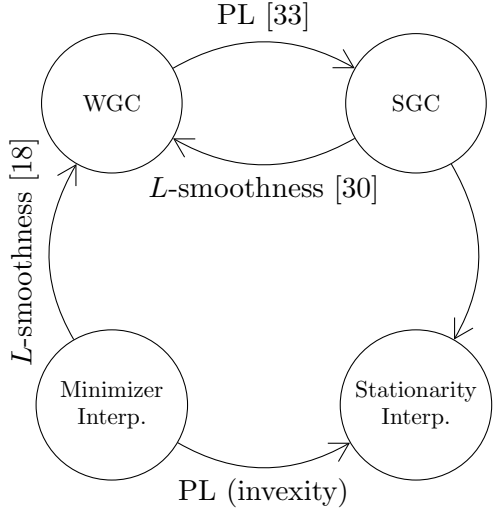
\begin{figure}
	\centering
	\begin{tikzpicture}
		\draw (0,4) node(wgc)[draw,circle, minimum size=0.05,style={scale=0.8}, text width=2cm, align=center]{WGC};
		
		\draw (4,4) node(sgc)[draw,circle, minimum size=0.05,style={scale=0.8}, text width=2cm, align=center]{SGC};
		
		\draw (0,0) node(mi)[draw,circle, minimum size=0.05,style={scale=0.8}, text width=2cm, align=center]{Minimizer Interp.};
		
		\draw (4,0) node(si)[draw,circle, minimum size=0.05,style={scale=0.8}, text width=2cm, align=center]{Stationarity Interp.};
		
		\draw[-{Straight Barb[angle'=60,scale=3]}] (wgc)to[out=30,in=150] node[above]{PL \cite{vaswani2019fast}} (sgc);
		\draw[-{Straight Barb[angle'=60,scale=3]}] (sgc)to[out=-150,in=-30] node[below]{$L$-smoothness \cite{mishkin2020interpolation}} (wgc);
		\draw[-{Straight Barb[angle'=60,scale=3]}] (mi)to[out=120,in=-120] node[above, shift={(-0.5cm, -1.7cm)}]{\rotatebox{90}{$L$-smoothness \cite{galli2023}}} (wgc);
		\draw[{Straight Barb[angle'=60,scale=3]}-] (si)to[out=-150,in=-30] node[below]{PL (invexity)} (mi);
		\draw[-{Straight Barb[angle'=60,scale=3]}] (sgc)to[out=-60,in=+60] node[right]{} (si);
	\end{tikzpicture}
	\caption{Relationships between SGC, WGC and Interpolation under suitable assumptions.}
	\label{fig:relationships_conditions}
\end{figure}

For the analysis of stochastic gradient methods, usually a boundedness assumption regarding the variance of the gradients estimator is also made. In this context, by variance of a random vector $v$ we refer, somewhat improperly, to the quantity
\begin{multline*}
	\text{Var}(v) = \mathbb{E}\left[\left\|v-\mathbb{E}[v]\right\|^2\right] = \mathbb{E}\left[\sum_{i=1}^{n}(v_i-\mathbb{E}[v_i])^2\right] = \sum_{i=1}^n\mathbb{E}[(v_i-\mathbb{E}[v_i])^2]  \\= \sum_{i=1}^n \text{Var}(v_i) = \sum_{i=1}^{n}\left(\mathbb{E}[v_i^2]-\mathbb{E}[v_i]^2\right) = \mathbb{E}\left[\sum_{i=1}^{n}v_i^2\right]-\left\|\mathbb{E}[v]\right\|^2 = \mathbb{E}[\|v\|^2]-\left\|\mathbb{E}[v]\right\|^2.
\end{multline*}
Using a similarly improper terminology, we can refer to the covariance between two random vectors $u$ and $v$ as
\begin{align*}
	\text{Cov}(u,v) &= \mathbb{E}\left[(u-\mathbb{E}[u])^T(v-\mathbb{E}[u])\right] = \mathbb{E}\left[\sum_{i=1}^{n}(u_i-\mathbb{E}[u_i])(v_i-\mathbb{E}[v_i])\right]\\&=\sum_{i=1}^{n}\mathbb{E}[(u_i-\mathbb{E}[u_i])(v_i-\mathbb{E}[v_i])]  = \sum_{i=1}^n \text{Cov}(u_i,v_i) = \sum_{i=1}^n \left(\mathbb{E}[u_iv_i]- \mathbb{E}[u_i]\mathbb{E}[v_i]\right) \\&=\mathbb{E}\left[\sum_{i=1}^{n}u_iv_i\right] -\mathbb{E}[u]^T\mathbb{E}[v] = \mathbb{E}[u^Tv] - \mathbb{E}[u]^T\mathbb{E}[v].
\end{align*}
Among the above equalities, the following one will be of particular relevance in the next sections:
$$\mathbb{E}[u^Tv] = \mathbb{E}[u]^T\mathbb{E}[v]+\text{Cov}(u,v).$$
The covariance between the search directions and the stochastic gradients will be an important quantity for the upcoming analysis.

\section{Conditions for a suitable line search}
\label{sec:line_search}
We are finally able to state properties concerning SGD-type algorithms employing rather general search directions and line searches under interpolation. 
As we have already pointed out earlier in the manuscript, the simple descent condition $d_k^T\nabla f_k(x^k)<0$ would be sufficient to ensure finite termination of the line search. However, similarly as in classical, deterministic nonlinear optimization, that condition is not enough to
\begin{enumerate}[(a)]
	\item guarantee a minimum step size obtained by the line search;
	\item guarantee a lower bound on the number of backtrack steps at each iteration.
\end{enumerate} 

Note that, for the particular set of results that is going to follow, there is no need of any of the conditions on $f$ defined in Section \ref{sec:prelim_discussion} except those concerning $L$-smoothness.
Then, the key assumption on the search direction employed in \eqref{eq:gen_update rule} that we will require to obtain all the forthcoming theoretical results is the following one.

\begin{assumption}
	\label{ass:ass1}
	The sequence of search directions is \textit{stochastic-gradient related} (SGR), i.e., there exist $c_1,c_2>0$ (independent on $k$) such that, for all $k=0,1,\ldots$, the following conditions hold:
	$$\|d_k\|\le c_1\|g_k(x^k)\|,\qquad d_k^Tg_k(x^k)\le -c_2\|g_k(x^k)\|^2.$$
\end{assumption}
\medskip \noindent It is easy to see that if, for example, $d_k=-H_kg_k(x^k)$ and the sequence $\{H_k\}$ satisfies the bounded eigenvalues condition $0< c_2\le \lambda_\text{min}(H_k)\le \lambda_\text{max}(H_k)\le c_1$, then the conditions from Assumption \ref{ass:ass1} hold:
\begin{gather*}
	\|d_k\| = \|H_kg_k(x^k)\|\le \lambda_\text{max}(H_k)\|g_k(x^k)\|\le c_1\|g_k(x^k)\|,\\
	d_k^Tg_k(x^k) = -g_k(x^k)H_kg_k(x^k)\le-\lambda_{\text{min}}(H_k)\|g_k(x^k)\|^2\le  -c_2\|g_k(x^k)\|^2.
\end{gather*}
Note that the above derivation does not require any hypothesis about $H_k$ being conditionally uncorrelated to $g_k(x^k)$. In fact, we shall outline that the line search is performed in a substantially deterministic setting, with a function $f_k$ which is well determined throughout iteration $k$. Thus, we can directly proceed stating the properties related to the Armijo-type line search condition and algorithm. The proofs of these results mostly follow those from classical literature, e.g., \cite{cartis2015worst,cartis2022evaluation}; we fully report them here for the sake of completeness.
\begin{lemma}
	\label{lemma:armijo}
	Let the sequence of directions $\{d_k\}$ satisfy Assumption \ref{ass:ass1}. Let the randomly drawn function $f_k$ considered at any iteration $k$ be an $L_k$-smooth function. Then, the Armijo-type condition \eqref{eq:gen_sls_cond} is satisfied at iteration $k$ for all $\alpha\in[0,\alpha_\text{low}^k]$, where $\alpha_\text{low}^k=\frac{2c_2(1-\gamma)}{c_1^2 L_k}.$
\end{lemma}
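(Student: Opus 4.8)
The plan is to reduce the Armijo-type condition to a scalar inequality in $\alpha$ via the descent lemma, and then absorb the two bounds of Assumption~\ref{ass:ass1} separately on the two sides of that inequality. Since $f_k$ is $L_k$-smooth, the descent lemma applied with $x = x^k + \alpha d_k$ and $y = x^k$ gives
\begin{equation*}
	f_k(x^k + \alpha d_k) \le f_k(x^k) + \alpha\, d_k^T \nabla f_k(x^k) + \frac{L_k}{2}\alpha^2 \|d_k\|^2.
\end{equation*}
Comparing this with the right-hand side of \eqref{eq:gen_sls_cond}, I observe that it suffices to show
\begin{equation*}
	\alpha\, d_k^T \nabla f_k(x^k) + \frac{L_k}{2}\alpha^2 \|d_k\|^2 \le \gamma\, \alpha\, d_k^T \nabla f_k(x^k),
\end{equation*}
which, after rearranging, amounts to $\frac{L_k}{2}\alpha^2 \|d_k\|^2 \le -(1-\gamma)\,\alpha\, d_k^T \nabla f_k(x^k)$.

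Recalling that $\nabla f_k(x^k) = g_k(x^k)$, I would then invoke the two SGR inequalities in the appropriate directions: the norm bound $\|d_k\|^2 \le c_1^2 \|g_k(x^k)\|^2$ upper-bounds the (nonnegative) quadratic term on the left, while the curvature bound $d_k^T g_k(x^k) \le -c_2\|g_k(x^k)\|^2$ lower-bounds the (nonnegative, since $\gamma<1$) right-hand side by $(1-\gamma)\,\alpha\, c_2 \|g_k(x^k)\|^2$. Chaining these, a \emph{sufficient} condition for the displayed inequality is
\begin{equation*}
	\frac{L_k}{2}\alpha^2 c_1^2 \|g_k(x^k)\|^2 \le (1-\gamma)\,\alpha\, c_2 \|g_k(x^k)\|^2.
\end{equation*}
For $\alpha > 0$ and $g_k(x^k) \neq 0$ this simplifies to $\alpha \le \frac{2c_2(1-\gamma)}{c_1^2 L_k} = \alpha_\text{low}^k$, yielding the claim; the endpoint $\alpha = 0$ satisfies \eqref{eq:gen_sls_cond} with equality.

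The argument is essentially a routine chaining of inequalities, so the only genuine care needed is bookkeeping on the inequality directions — one must use the \emph{upper} bound on $\|d_k\|$ and the \emph{lower} bound on $-d_k^T g_k(x^k)$, and confirm that $1-\gamma>0$ so that the right-hand side is indeed nonnegative and the substitution preserves the direction of the inequality. I would also dispose of the degenerate case $g_k(x^k)=0$ at the outset: the norm bound forces $d_k = 0$, whence both sides of \eqref{eq:gen_sls_cond} collapse to $f_k(x^k)$ and the condition holds for every $\alpha$, consistent with the stated interval.
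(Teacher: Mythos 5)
Your proof is correct and is essentially the paper's own argument: the same descent lemma plus the two SGR bounds, combined with the same algebra, except that the paper runs it in contrapositive form (assuming the Armijo condition fails and deriving $\alpha > \alpha_\text{low}^k$) while you argue directly that $\alpha \le \alpha_\text{low}^k$ suffices. Your explicit handling of the degenerate cases $\alpha = 0$ and $g_k(x^k) = 0$ is a minor tidiness improvement over the paper's proof, which divides by $\alpha\|g_k(x^k)\|^2$ without comment, but the substance is identical.
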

\begin{proof}
	Let us assume that a step $\alpha$ does not satisfy the Armijo-type condition, i.e.,
	$$f_k(x^{k}+\alpha d_k)>f_k(x^k)+\gamma \alpha g_k(x^k)^Td_k.$$
	By the $L_k$-smoothness of $f_k$, we also have that
	\begin{equation*}
		f_k(x^{k}+\alpha d_k) \leq f_k(x^k) + \alpha g_k(x^k)^Td_k + \frac{\alpha^2 L_k}{2} ||d_k||^2.
	\end{equation*}
	Combining the two inequalities, we get
	$$\gamma\alpha g_k(x^k)^Td_k<\alpha g_k(x^k)^Td_k + \frac{\alpha^2 L_k}{2} ||d_k||^2,$$
	i.e.,
	$$\alpha(1-\gamma ) g_k(x^k)^Td_k + \frac{\alpha^2 L_k}{2} ||d_k||^2>0.$$
	Bounding the quantities $g_k(x^k)^Td_k$ and $\|d_k\|$ according to Assumption \ref{ass:ass1}, we furthermore get
	$$-c_2\alpha(1-\gamma ) \|g_k(x^k)\|^2 + \frac{c_1^2\alpha^2 L_k}{2} ||g_k(x^k)||^2>0.$$
	Dividing by $\alpha \|g_k(x^k)\|^2$ and rearranging, we finally get
	$$\alpha>\frac{2c_2(1-\gamma)}{c_1^2L_k}=\alpha_\text{low}^k,$$
	which completes the proof.
\end{proof}

\begin{proposition}
	\label{prop:armijo}
	Let the sequence of directions $\{d_k\}$ satisfy Assumption \ref{ass:ass1} and the randomly drawn functions $f_k$ considered at any iteration $k$ be $L_k$-smooth functions, and let $L_\text{max} = \max_{k}L_k$. Further assume that $\alpha_0^k\le \alpha_\text{max}$ for all $k$. 
	Then, at each iteration $k$, the number of backtrack steps $j_k$ (and thus
	of additional stochastic evaluations of $f$) is bounded above by 
	$$j_k\le j^*=\max\left\{0,\left\lceil\log_{1/\delta}\frac{\alpha_\text{max}}{\alpha_\text{low}}\right\rceil\right\},$$
	where $\alpha_\text{low} = \frac{2c_2(1-\gamma)}{c_1^2L_\text{max}}$ and the step size $\alpha_k$ is bounded by
	$$\alpha_k\ge \min\{\alpha_0^k,\delta\alpha_\text{low}\} .$$
\end{proposition}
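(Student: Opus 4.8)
The plan is to leverage Lemma \ref{lemma:armijo} together with the mechanics of the backtracking rule \eqref{eq:gen_sls_rule}. The first step I would take is to turn the per-iteration guarantee of Lemma \ref{lemma:armijo} into a uniform one: since $L_k \le L_\text{max}$ for every $k$, we have $\alpha_\text{low} = \frac{2c_2(1-\gamma)}{c_1^2 L_\text{max}} \le \alpha_\text{low}^k$, so the Armijo-type condition \eqref{eq:gen_sls_cond} holds for every $\alpha \in [0,\alpha_\text{low}]$ regardless of $k$. This uniform admissibility threshold is what makes both claimed bounds independent of the iteration.

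For the bound on the number of backtracks, I would observe that the candidate tested after $j$ backtracks is $\alpha_0^k\delta^j$, with $\delta\in(0,1)$. As soon as $\alpha_0^k\delta^j \le \alpha_\text{low}$, the condition is certainly satisfied, so the procedure must terminate no later than the smallest such $j$. Taking logarithms of $\alpha_0^k\delta^j \le \alpha_\text{low}$ and dividing by $\log\delta < 0$ (which flips the inequality) yields that this smallest index equals $\lceil \log_{1/\delta}(\alpha_0^k/\alpha_\text{low})\rceil$; using $\alpha_0^k \le \alpha_\text{max}$ then gives $j_k \le \lceil \log_{1/\delta}(\alpha_\text{max}/\alpha_\text{low})\rceil$. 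The outer $\max\{0,\cdot\}$ simply absorbs the degenerate case $\alpha_\text{max}\le\alpha_\text{low}$, in which $\alpha_0^k$ is already admissible and $j_k=0$.

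For the lower bound on $\alpha_k$, I would split into two cases. If no backtracking occurs ($j_k=0$), then $\alpha_k=\alpha_0^k \ge \min\{\alpha_0^k,\delta\alpha_\text{low}\}$ trivially. If $j_k\ge 1$, then the penultimate tested step $\alpha_0^k\delta^{j_k-1}$ \emph{violated} \eqref{eq:gen_sls_cond}; invoking the contrapositive of the computation inside Lemma \ref{lemma:armijo} (a violating step must exceed $\alpha_\text{low}^k \ge \alpha_\text{low}$), I get $\alpha_0^k\delta^{j_k-1} > \alpha_\text{low}$, whence $\alpha_k = \delta\cdot\alpha_0^k\delta^{j_k-1} > \delta\alpha_\text{low} \ge \min\{\alpha_0^k,\delta\alpha_\text{low}\}$. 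Combining the two cases gives the stated bound.

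I do not expect any serious obstacle here: this is the standard backtracking line-search analysis, and all the analytical content is already packaged in Lemma \ref{lemma:armijo}. The only points requiring care are bookkeeping ones: keeping the direction of the inequality correct when dividing by $\log\delta<0$, and being sure to apply the violation of \eqref{eq:gen_sls_cond} at the \emph{penultimate} tested step (not at the accepted step) in order to extract the factor $\delta\alpha_\text{low}$, since the accepted step itself carries no useful lower bound.
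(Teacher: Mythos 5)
Your proof is correct, and for the backtrack-count bound it coincides with the paper's argument: both use the uniform threshold $\alpha_\text{low}\le\alpha_\text{low}^k$ from Lemma \ref{lemma:armijo} and the fact that the first candidate $\alpha_0^k\delta^j$ falling below $\alpha_\text{low}$ must be accepted, so $j_k\le\max\{0,\lceil\log_{1/\delta}(\alpha_\text{max}/\alpha_\text{low})\rceil\}$. Where you genuinely diverge is the lower bound on $\alpha_k$, and your route is in fact the sounder one. The paper derives that bound from the global cap on the backtrack count: from $j_k\le j^*\le\log_{1/\delta}(\alpha_\text{max}/\alpha_\text{low})+1$ it gets $\delta^{j_k-1}\ge\alpha_\text{low}/\alpha_\text{max}$ and then writes $\alpha_\text{low}/\alpha_\text{max}\ge\alpha_\text{low}/\alpha_0^k$ to conclude $\alpha_k=\delta^{j_k}\alpha_0^k\ge\delta\alpha_\text{low}$; but that middle inequality is backwards, since $\alpha_0^k\le\alpha_\text{max}$ gives $\alpha_\text{low}/\alpha_0^k\ge\alpha_\text{low}/\alpha_\text{max}$, so the paper's chain only closes in the extreme case $\alpha_0^k=\alpha_\text{max}$. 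Your argument --- if $j_k\ge1$, the penultimate candidate $\alpha_0^k\delta^{j_k-1}$ violated \eqref{eq:gen_sls_cond}, and by the contrapositive of Lemma \ref{lemma:armijo} any violating step must exceed $\alpha_\text{low}^k\ge\alpha_\text{low}$, whence $\alpha_k>\delta\alpha_\text{low}$ --- is the standard first-violation argument; it is iteration-local, relies only on the minimality of $j_k$ in the rule \eqref{eq:gen_sls_rule}, needs no reference to $\alpha_\text{max}$ for this half of the statement, and thereby repairs the weak link in the paper's derivation. Your case split on $j_k=0$ versus $j_k\ge1$ is also exhaustive, unlike the paper's split on $\alpha_0^k<\alpha_\text{low}$ versus $\alpha_0^k>\alpha_\text{low}$, which silently skips the boundary case $\alpha_0^k=\alpha_\text{low}$.
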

\begin{proof}
	By the definitions of $\alpha_\text{low}$ and $L_\text{max}$, we certainly have for all $k$ that $\alpha_\text{low}\le \alpha_\text{low}^k$ for all $k$.
	Now, let $j^*$ be the smallest integer such that $$\delta^{j^{*}}\alpha_\text{max}\le \alpha_\text{low}.$$
	For any $k$ we thus have
	$$\delta^{j^{*}}\alpha_0^k \le \delta^{j^{*}}\alpha_\text{max}\le \alpha_\text{low}\le \alpha_\text{low}^k.$$
	By Lemma \ref{lemma:armijo}, the step $\delta^{j^{*}}\alpha_0^k$ is thus guaranteed to satisfy the Armijo-type condition. Therefore, by the Armijo line search, we have $j_k\le j^*$.
	By the definition of $j^*$ we also have that
	$$\delta^{j^{*}}\le \frac{\alpha_\text{low}}{\alpha_\text{max}},$$
	i.e.,
	$$j^*\ge \log_{\delta}\frac{\alpha_\text{low}}{\alpha_\text{max}} = \log_{1/\delta}\frac{\alpha_\text{max}}{\alpha_\text{low}}.$$ 
	Actually, $j^*$ is the smallest positive integer such that the above inequality holds, so 
	$$j^*=\max\left\{0,\left\lceil\log_{1/\delta}\frac{\alpha_\text{max}}{\alpha_\text{low}}\right\rceil\right\}.$$
	
	Now, if $\alpha_0^k<\alpha_\text{low}$, there is certainly no need of backtracking, $j_k=0$ and $\alpha_k=\alpha_0^k$.
	Otherwise, if $\alpha_0^k>\alpha_\text{low}$, we have $$j_k\le j^*=\left\lceil\log_{1/\delta}\frac{\alpha_\text{max}}{\alpha_\text{low}}\right\rceil\le \log_{1/\delta}\frac{\alpha_\text{max}}{\alpha_\text{low}}+1,$$
	and then 
	$$\frac{1}{\delta^{j_k-1}}\le \frac{\alpha_\text{max}}{\alpha_\text{low}},\qquad\text{i.e.,}\qquad \delta^{j_k-1}\ge \frac{\alpha_\text{low}}{\alpha_\text{max}}\ge \frac{\alpha_\text{low}}{\alpha_0^k},$$
	and finally
	$\delta\alpha_\text{low}\le \delta^{j_k}\alpha_0^k=\alpha_k.$
	The proof is thus complete.
\end{proof}

The above result is important for two main reasons: first, it guarantees a lower bound on the step size to be used at each iteration, enhancing the substantial progress of the overall algorithm, regardless of the particular stochastic function $f_k$ drawn at iteration $k$; second, the upper bound on the number of backtracks allows convergence rate and complexity results in terms of number of iterations to directly translate into equivalent results in terms of number of stochastic function and gradient evaluations.

\begin{remark}
	We expect the results in Lemma \ref{lemma:armijo} and Proposition \ref{prop:armijo} to be be extendable to the case of a nonmonotone line search, exploiting the reasonings from \cite{cartis2015worst} and \cite{galli2023}. We prefer to focus on the monotone case in our analysis for the sake of simplicity, as the focus of this manuscript is on directions, rather than on the particular type of line search.
\end{remark}

\section{Conditions on Directions for Global Convergence}
\label{sec:full_ana}
We finally turn to the convergence properties of the generalized class of SGD-type algorithms. Of course, for the analysis of the overall algorithmic framework we need to assume that both the PL property and minimizers interpolation (and thus SGC and WGC) hold.
Moreover, to give the main convergence result, we need to ensure that the sequence $\{d_k\}$ satisfies a further property in addition to Assumption \ref{ass:ass1}. 

In particular, we need bounds concerning the expected values (conditioned to $x^k$) of $d_k$ and $g_k(x^k)$, which are usually assumed also in the analysis of SGD methods in absence of interpolation \cite[Sec.\ 4.1]{bottou2018}:
\begin{equation}
	\label{eq:bottou-like}
	\|\mathbb{E}_k[d_k]\|\le C_1\|\nabla f(x^k)\|,\qquad \mathbb{E}_k[d_k]^T\nabla f(x^k)\le -C_2\|\nabla f(x^k)\|^2,
\end{equation}
for some $C_1,C_2>0$.

When $d_k$ is obtained according to $d_k=-H_kg_k(x^k)$ with $\{H_k\}$ satisfying the bounded eigenvalues condition \textit{and $H_k$ being conditionally uncorrelated with $g_k(x^k)$}, the conditions immediately hold:
\begin{multline*}
	\|\mathbb{E}_k[d_k]\| = \|\mathbb{E}_k[H_kg_k(x^k)]\| = \|\mathbb{E}_k[H_k]\mathbb{E}_k[g_k(x^k)]\| = \|\mathbb{E}_k[H_k]\nabla f(x^k)\| =\\ \|\mathbb{E}_k[H_k\nabla f(x^k)]\|\le \mathbb{E}_k[\|H_k\nabla f(x^k)\|]\le \mathbb{E}_k[\lambda_\text{max}(H_k)\|\nabla f(x^k)\|]\le c_1\|\nabla f(x^k)\|,
\end{multline*}
and
\begin{align*}
	\mathbb{E}_k[d_k]^T\nabla f(x^k) &= \mathbb{E}_k[-H_kg_k(x^k)]^T\nabla f(x^k) = -\mathbb{E}_k[g_k(x^k)]^T\mathbb{E}_k[H_k]\nabla f(x^k) \\&= -\nabla f(x^k)\mathbb{E}_k[H_k]\nabla f(x^k) = -\mathbb{E}_k[\nabla f(x^k)H_k\nabla f(x^k)]\\&\le -\mathbb{E}_k[\lambda_\text{min}(H_k)\|\nabla f(x^k)\|^2] = -c_2\|\nabla f(x^k)\|^2.
\end{align*}
Stochastic Newton and Quasi-Newton approaches (like, e.g., those from \cite{meng2020fast}) thus enjoy this type of property if applied with uniform positive definiteness safeguards: the use of line searches is therefore sound within these methods. On the other hand, we cannot make an analogous straight reasoning for
\begin{itemize}
	\item adaptive SGD methods, like Adam, where the definition of the preconditioner $H_k$ depends on the values of $g_k(x^k)$;
	\item methods with momentum-type terms in the search direction, where $$H_k=I-\beta\,\text{diag}\left(\frac{x_1^k-x_1^{k-1}}{(g_k(x^k))_1},\ldots,\frac{x_n^k-x_n^{k-1}}{(g_k(x^k))_n}\right);$$
	\item conjugate gradient type directions, where the direction has a similar structure as momentum methods and the value of $\beta$ is also dependent on $g_k(x^k)$.
\end{itemize}

We are thus interested in carrying out the analysis with a general characterization of the search directions, not tied to uncorrelated preconditioning operations on the stochastic gradient. 
To this aim, we state an assumption related to the covariance of the search directions and the stochastic gradients.
\begin{assumption}
	\label{ass:ass2}
	There exists $c_3 > 0$ such that the sequence of search directions $\{d_k\}$ satisfies the following property:
	$${\text{Cov}_k}(d_k,g_k(x^k))\ge - c_3 {\text{Var}_k}(g_k(x^k)).$$
\end{assumption}
Assumption \ref{ass:ass2} somehow asks for the employed search direction not to vary in contrast and infinitely more than the stochastic gradient does. The assumption holds, for example, if the variance of $d_k$ is bounded by the variance of $g_k(x^k)$: 
\begin{equation}\label{eq:bound_var}
		{\text{Var}_k}(d_k(x^k))\le {c}_3 {\text{Var}_k}(g_k(x^k)).
\end{equation} 
We indeed have
	\begin{align*}
		|{\text{Cov}_k}(d_k,g_k(x^k))| &= \left|\sum_{i=1}^{n}\text{Cov}_k((d_k)_i,(g_k(x^k))_i)\right| \le  \sum_{i=1}^{n}\left|\text{Cov}_k((d_k)_i,(g_k(x^k))_i)\right|\\ 
		& \le \sum_{i=1}^{n}\sqrt{\text{Var}_k((d_k)_i)}\sqrt{\text{Var}_k((g_k(x^k))_i)}\\
		& = \left(
		\sqrt{\text{Var}_k((d_k)_1)}\; \dots\; \sqrt{\text{Var}_k((d_k)_n)}
			\right) \begin{pmatrix}
				\sqrt{\text{Var}_k((g_k(x^k))_1)} \\
				\vdots \\
				\sqrt{\text{Var}_k((g_k(x^k))_n)}
			\end{pmatrix}\\
		& \le \sqrt{\text{Var}_k(d_k)}\sqrt{\text{Var}_k(g_k(x^k))}  \le {c}_3 \text{Var}_k(g_k(x^k)),
	\end{align*}
	where the second inequality comes from Pearson's formula and the third one from Cauchy-Schwartz inequality $u^Tv \leq \|u\|\|v\|$ with $u_i = \sqrt{\text{Var}_k((d_k)_i)}$ and $v_i=\sqrt{\text{Var}_k((g_k(x^k))_i)}$.

In other words, Assumption \ref{ass:ass2} is guaranteed to hold if the variance of the direction does not grow more than linearly with the variance of the stochastic gradients. 
This assumption seems thus reasonable to make, especially in conjunction with the common hypothesis that $\text{Var}_k(g_k(x^k))\le M$. In the following remark we specifically discuss the interesting case of momentum-type directions. 

\begin{remark}
	The pure momentum-type direction with constant $\beta$ satisfies \eqref{eq:bound_var} and thus Assumption 5.1 with $c_3=1$:
	$$\text{Var}_k(-g_k(x^k)+\beta s_k) = \text{Var}_k(-g_k(x^k)) +\text{Var}_k(\beta s_k) = \text{Var}_k(-g_k(x^k)),$$
	where the first equality follows from the independence of $g_k(x^k)$ on both $s_k$ and $\beta$ and the second one follows from $\beta$ being a constant and $s_k$ being deterministic at iteration $k$. 
	Thus, if the momentum direction passes the stochastic-gradient-related test at each iteration, we are guaranteed that it satisfies both Assumptions 4.1 and 5.1. 
	
	Of high interest in practice is also the case of momentum with restarts based on the stochastic-gradient related conditions: formally, we have $d_k = -g_k(x^k) + \beta_k s_k$, where $$\beta_k= \begin{cases}
		\bar{\beta}&\text{if }\|p_k\|\le c_1\|g_k(x^k)\|\text{ and }p_k^Tg_k(x^k)\le -c_2\|g_k(x^k)\|^2,\\
		0&\text{otherwise},
	\end{cases}$$
	being $p_k = -g_k(x^k) + \bar{\beta} s_k$.
	We argue that Assumption 5.1 in this case is very reasonable, as it holds if we assume that $\|s_k\|\le M$ and  $\text{Var}_k(\beta_k) \le \hat{c}_3 \text{Var}_k(g_k(x^k))$.
	The former condition can be immediately enforced in practice by a simple clipping operation that is independent of $g_k(x^k)$. The latter assumption will instead be commented later.
	The variance of $d_k$ can now be rewritten as follows:
	\begin{align*}
		\text{Var}_k(-g_k(x^k)+\beta_k s_k) = \text{Var}_k(g_k(x^k)) + \|s_k\|^2\text{Var}_k(\beta_k) - 2\text{Cov}_k(g_k(x^k),\beta_k s_k)
	\end{align*}
	where we used the fact that $s_k$ is constant given $x^k$; we can furthermore write
	\begin{align*}
		|\text{Cov}_k(g_k(x^k),\beta_k s_k)| &\le \sum_{i=1}^{n} \left|\text{Cov}_k((g_k(x^k))_i,\beta_k (s_k)_i)\right|\\&\le 
		\sum_{i=1}^{n} \sqrt{\text{Var}_k((g_k(x^k))_i)}\sqrt{\text{Var}_k(\beta_k (s_k)_i)}\\& = \sqrt{\text{Var}_k(\beta_k)}\sum_{i=1}^{n} \sqrt{\text{Var}_k((g_k(x^k))_i)} |(s_k)_i|\\&\le \sqrt{\text{Var}_k(\beta_k)}\sqrt{\text{Var}_k(g_k(x^k))}\|s_k\|
	\end{align*}
	where we used that $\beta_k$ is constant for all $i$ and we exploited Cauchy-Schwartz inequality on the dot product $u^Tv$ where $v_i = |(s_k)_i|$ and $u_i=\sqrt{\text{Var}_k((g_k(x^k))_i)}$.
	We can then proceed by bounding \begin{align*}
		\text{Var}_k(-g_k(x^k)+\beta_k s_k) &\le \text{Var}_k(g_k(x^k)) + \|s_k\|^2\text{Var}_k(\beta_k) + 2\sqrt{\text{Var}_k(\beta_k)}\sqrt{\text{Var}_k(g_k(x^k))}\|s_k\|\\&\le  \text{Var}_k(g_k(x^k))\left(1+M^2\hat{c}_3+2M\sqrt{\hat{c}_3}\right),
	\end{align*}
	which again concludes the proof for Assumption 5.1 being satisfied.
	
	\smallskip
	
	Concerning the assumption $\text{Var}_k(\beta_k) \le \hat{c}_3 \text{Var}_k(g_k(x^k))$: $\beta_k$ follows a Bernoulli distribution, with a probability $p_k$ of taking the value $\bar{\beta}$, and its variability only depends on the variability of $g_k(x^k$). Without committing to a formal derivation, heuristically the assumption appears reasonable to make, especially if we note that it provably holds in the extreme cases:
	\begin{itemize}
		\item if $g_k(x^k)$ is deterministic, then we have that the variance of both $\beta_k$ and $g_k(x^k)$ is zero and the condition is satisfied;
		\item if the variance of the stochastic gradients gets very large, we will certainly have $\text{Var}_k(\beta_k)\le \hat{c}_3\text{Var}_k(g_k)$, as for Bernoulli distribution it is known that $\text{Var}_k(\beta_k)=p(1-p)\bar{\beta}^2\le \frac{\bar{\beta}^2}{4}$.
	\end{itemize}
\end{remark}

\medskip
Combined with Assumption \ref{ass:ass1} and the SGC, the condition from Assumption \ref{ass:ass2} actually implies for $\{d_k\}$ the stronger assumptions of the form \eqref{eq:bottou-like}.
\begin{lemma}
	\label{lemma:bottou}
	Let the SGC condition and Assumptions \ref{ass:ass1}-\ref{ass:ass2} hold, with $c_2>c_3(1-\frac{1}{\rho})$. Then, we have:
	\begin{gather}
		\label{eq:bottou_conditions_a}
		\|\mathbb{E}_k[d_k]\|\le c_1\sqrt{\rho}\|\nabla f(x^k)\|,\\\label{eq:bottou_conditions_b}\mathbb{E}_k[d_k]^T\nabla f(x^k)\le -\bigg(c_2-c_3\bigg(1-\frac{1}{\rho}\bigg)\bigg)\|\nabla f(x^k)\|^2.
	\end{gather}
\end{lemma}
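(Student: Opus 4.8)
The plan is to prove the two inequalities separately. The first, \eqref{eq:bottou_conditions_a}, is a routine chain of Jensen's inequality, Assumption \ref{ass:ass1}, and the SGC, so I would dispatch it quickly; the real work lies in \eqref{eq:bottou_conditions_b}. For \eqref{eq:bottou_conditions_a} I would start from $\|\mathbb{E}_k[d_k]\|\le \mathbb{E}_k[\|d_k\|]$ (Jensen, convexity of the norm), then apply the norm bound $\|d_k\|\le c_1\|g_k(x^k)\|$ of Assumption \ref{ass:ass1} to get $\mathbb{E}_k[\|d_k\|]\le c_1\mathbb{E}_k[\|g_k(x^k)\|]$, a second Jensen step $\mathbb{E}_k[\|g_k(x^k)\|]\le \sqrt{\mathbb{E}_k[\|g_k(x^k)\|^2]}$, and finally the SGC $\mathbb{E}_k[\|g_k(x^k)\|^2]\le \rho\|\nabla f(x^k)\|^2$. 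Chaining these yields the factor $c_1\sqrt{\rho}$.

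For \eqref{eq:bottou_conditions_b} the natural starting point is the covariance identity recalled in Section \ref{sec:prelim_discussion}, applied to $u=d_k$ and $v=g_k(x^k)$: since $\mathbb{E}_k[g_k(x^k)]=\nabla f(x^k)$, it reads $\mathbb{E}_k[d_k^Tg_k(x^k)]=\mathbb{E}_k[d_k]^T\nabla f(x^k)+\text{Cov}_k(d_k,g_k(x^k))$, and hence $\mathbb{E}_k[d_k]^T\nabla f(x^k)=\mathbb{E}_k[d_k^Tg_k(x^k)]-\text{Cov}_k(d_k,g_k(x^k))$. I would then bound the two pieces on the right: the inner-product condition of Assumption \ref{ass:ass1}, after taking conditional expectation, gives $\mathbb{E}_k[d_k^Tg_k(x^k)]\le -c_2\mathbb{E}_k[\|g_k(x^k)\|^2]$, while Assumption \ref{ass:ass2} gives $-\text{Cov}_k(d_k,g_k(x^k))\le c_3\text{Var}_k(g_k(x^k))$, so that $\mathbb{E}_k[d_k]^T\nabla f(x^k)\le -c_2\mathbb{E}_k[\|g_k(x^k)\|^2]+c_3\text{Var}_k(g_k(x^k))$.

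The crux, and the step I expect to be the main obstacle, is extracting precisely the coefficient $c_2-c_3(1-\tfrac1\rho)$ rather than a looser one. The key observation is that the SGC, combined with the variance identity $\text{Var}_k(g_k(x^k))=\mathbb{E}_k[\|g_k(x^k)\|^2]-\|\nabla f(x^k)\|^2$, is equivalent to $\text{Var}_k(g_k(x^k))\le (1-\tfrac1\rho)\mathbb{E}_k[\|g_k(x^k)\|^2]$. Substituting this bound on the variance, and crucially keeping everything in terms of $\mathbb{E}_k[\|g_k(x^k)\|^2]$ rather than relaxing each term to $\|\nabla f(x^k)\|^2$ separately, collapses the two contributions into $\mathbb{E}_k[d_k]^T\nabla f(x^k)\le -\big(c_2-c_3(1-\tfrac1\rho)\big)\mathbb{E}_k[\|g_k(x^k)\|^2]$. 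At this point the hypothesis $c_2>c_3(1-\tfrac1\rho)$ makes the coefficient strictly positive, so applying Jensen's inequality in the form $\mathbb{E}_k[\|g_k(x^k)\|^2]\ge \|\mathbb{E}_k[g_k(x^k)]\|^2=\|\nabla f(x^k)\|^2$ preserves the sense of the inequality and delivers \eqref{eq:bottou_conditions_b}. The delicate point is exactly this ordering: if one instead bounds $\text{Var}_k$ by $(\rho-1)\|\nabla f(x^k)\|^2$ and the leading second-moment term by $\|\nabla f(x^k)\|^2$ independently, one obtains the weaker coefficient $c_2-c_3(\rho-1)$; performing the variance-to-second-moment relaxation first and invoking Jensen last, under the positivity condition on the coefficient, is what makes the sharper stated bound attainable.
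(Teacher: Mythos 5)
Your proposal is correct and follows essentially the same route as the paper's proof: the covariance identity $\mathbb{E}_k[d_k]^T\nabla f(x^k)=\mathbb{E}_k[d_k^Tg_k(x^k)]-\text{Cov}_k(d_k,g_k(x^k))$, the two conditions of Assumption \ref{ass:ass1}, Assumption \ref{ass:ass2}, the SGC used to keep everything in terms of $\mathbb{E}_k[\|g_k(x^k)\|^2]$, and Jensen applied last under the positivity of $c_2-c_3(1-\frac{1}{\rho})$. Your phrasing of the SGC step as the variance bound $\text{Var}_k(g_k(x^k))\le(1-\frac{1}{\rho})\mathbb{E}_k[\|g_k(x^k)\|^2]$ is algebraically identical to the paper's bounding of $-c_3\|\mathbb{E}_k[g_k(x^k)]\|^2$ by $-\frac{c_3}{\rho}\mathbb{E}_k[\|g_k(x^k)\|^2]$, and your remark about the ordering of relaxations correctly identifies why the sharper coefficient is obtained.
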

\begin{proof}
	From Jensen's inequality, the first condition from Assumption \ref{ass:ass1} and the SGC, we immediately get that
	$$\|\mathbb{E}_k[d_k]\|^2 \leq \mathbb{E}_k[\|d_k\|^2] \leq c_1^2 \mathbb{E}_k[\|g_k(x^k)\|^2] \leq \rho c_1^2 \|\nabla f (x^k)\|^2.$$
	Recalling that $\rho, c_1 > 0$, we obtain
	$$\|\mathbb{E}_k[d_k]\| \leq c_1 \sqrt{\rho} \|\nabla f (x^k)\|,$$
	i.e., condition \eqref{eq:bottou_conditions_a}.
	
	\medskip
	
	\noindent Given $g_k(x^k)$ is an unbiased estimator of $\nabla f(x^k)$ and by the definition of the covariance, we have that
	$$\mathbb{E}_k[d_k]^T\nabla f(x^k) = \mathbb{E}_k[d_k]^T \mathbb{E}_k[g_k(x^k)] = \mathbb{E}_k[d_k^T g_k(x^k)] - {\text{Cov}_k} (d_k,g_k(x^k)),$$
	from which, using Assumption \ref{ass:ass2}, we get that
	$$
	\begin{aligned}
		\mathbb{E}_k[d_k]^T\nabla f(x^k) \leq & \mathbb{E}_k[d_k^T g_k(x^k)] + c_3 {\text{Var}_k}(g_k(x^k)) \\
		= & \mathbb{E}_k[d_k^T g_k(x^k)] + c_3 \mathbb{E}_k[\|g_k(x^k)\|^2] - c_3 \|\mathbb{E}_k[g_k(x^k)]\|^2.
	\end{aligned}
	$$
	Applying the second condition from Assumption \ref{ass:ass1} to the quantity $\mathbb{E}_k[d_k^T g_k(x^k)]$ leads to 
	$$\mathbb{E}_k[d_k]^T\nabla f(x^k) \leq - c_2 \mathbb{E}_k[\|g_k(x^k)\|^2] + c_3 \mathbb{E}_k[\|g_k(x^k)\|^2] - c_3 \|\mathbb{E}_k[g_k(x^k)]\|^2.$$
	We can now apply the SGC to the rightmost term to obtain
	$$\mathbb{E}_k[d_k]^T\nabla f(x^k) \leq - c_2 \mathbb{E}_k[\|g_k(x^k)\|^2] + c_3 \mathbb{E}_k[\|g_k(x^k)\|^2] - \frac{c_3}{\rho} \mathbb{E}_k[\|g_k(x^k)\|^2],$$
	and thus 
	\begin{align*}
		\mathbb{E}_k[d_k]^T\nabla f(x^k) &\le - \bigg(c_2-c_3\bigg(1-\frac{1}{\rho}\bigg)\bigg)\mathbb{E}_k[\|g_k(x^k)\|^2]\\&\le - \bigg(c_2-c_3\bigg(1-\frac{1}{\rho}\bigg)\bigg)\|\mathbb{E}_k[g_k(x^k)]\|^2,
	\end{align*}
	where the last step, that completes the proof, comes from Jensen's inequality ($- \|\mathbb{E}_k[g_k(x^k)]\|^2 \geq - \mathbb{E}_k[\|g_k(x^k)\|^2]$).
\end{proof}

\begin{remark}
	\label{remark:ass2}
	We shall note that, to guarantee condition \eqref{eq:bottou_conditions_a}, Assumption \ref{ass:ass1} coupled with the SGC is sufficient. The additional Assumption \ref{ass:ass2} is thus needed to guarantee the second condition in \eqref{eq:bottou-like}.
\end{remark}

The above result is a key from the algorithmic standpoint. Conditions of the form \eqref{eq:bottou-like} involve the true gradient and the conditional expected value of the search direction, and are thus not checkable as we only have access to a realization of the random variables. On the other hand, the conditions from Assumption \ref{ass:ass1} concern the realization itself of the random variables: for given $c_1$ and $c_2$, it is possible to numerically assess whether the conditions are verified or not. 

Safeguard clauses based on these conditions can thus be employed in algorithms; if, at a certain iteration, one of the two inequalities happens to be false for the obtained search direction, we can for example adopt a restart strategy \cite{powell1977restart,chan2022nonlinear,fan2023} and switch to the simple SGD direction, which is guaranteed to possess all the suitable properties needed for convergence; of course, more sophisticated strategies could also be devised (see, e.g., \cite{lapucci2024globally} for the momentum case).

We are finally able to state the main convergence result of the paper.

\begin{theorem}
	\label{th:main}
	Assume $f:\mathbb{R}^n\to\mathbb{R}$ is an $L$-smooth function, satisfying the PL condition and the minimizer interpolation property, and that the randomly drawn functions $f_k$ considered at any iteration $k$ are $L_k$-smooth functions, with $L\le L_\text{max} = \max_{k}L_k$. Let the sequence $\{d_k\}$ satisfy Assumptions \ref{ass:ass1}-\ref{ass:ass2} with $c_2>c_3(1-\frac{1}{\rho})$. Then, if $\{x^k\}$ is the sequence produced by iterating updates of the form \eqref{eq:gen_update rule}, where $\alpha_k$ is selected according to \eqref{eq:gen_sls_rule} with $\delta\alpha_{\text{low}}\le\alpha_0^k\le \alpha_{\text{max}}$ for all $k$, then the following property holds:
	\begin{equation}
		\label{eq:main_thesis}
		\mathbb{E}[f(x^{k+1}) - f(x^*)] \leq (\eta\alpha_{\text{max}})^k(f(x^0)-f(x^*)),
	\end{equation}
	where $\eta= \left(\frac{L_{\text{max}} c_1^2}{2 c_2}\left(\frac{1}{\gamma}+\frac{1}{\delta(1-\gamma)}\right) - 2 \bigg(c_2-c_3\bigg(1-\frac{1}{\rho}\bigg)\bigg) \mu   \right)$.
	
	\noindent Therefore, if all the constants involved are such that
	$0<\eta<\frac{1}{\alpha_\text{max}}$, the rate of convergence is linear with an $\mathcal{O}(\log(\frac{1}{\epsilon}))$ iteration, stochastic function evaluations and stochastic gradient evaluations complexity to achieve an $\epsilon$-accurate solution in expectation.
\end{theorem}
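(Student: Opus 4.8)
\section*{Proof proposal}

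The plan is to establish a one-step conditional contraction of the form $\mathbb{E}_k[f(x^{k+1})-f(x^*)]\le \eta\alpha_{\text{max}}(f(x^k)-f(x^*))$ and then obtain \eqref{eq:main_thesis} by taking total expectations and unrolling this recursion over $k$. The natural starting point is the descent lemma applied to the true $L$-smooth objective $f$ along the update $x^{k+1}=x^k+\alpha_k d_k$, which gives $f(x^{k+1})\le f(x^k)+\alpha_k\nabla f(x^k)^Td_k+\tfrac{L}{2}\alpha_k^2\|d_k\|^2$. After subtracting $f(x^*)$, the task reduces to bounding, in conditional expectation, the linear term $\alpha_k\nabla f(x^k)^Td_k$ and the curvature term $\tfrac{L}{2}\alpha_k^2\|d_k\|^2$, and then combining them with the leading optimality gap into the single coefficient $\eta\alpha_{\text{max}}$.

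For the curvature term I would use Assumption \ref{ass:ass1} in the combined form $\|d_k\|^2\le c_1^2\|g_k(x^k)\|^2\le \tfrac{c_1^2}{c_2}\bigl(-g_k(x^k)^Td_k\bigr)$, together with $L\le L_\text{max}$ and the step-size bounds $\delta\alpha_\text{low}\le\alpha_k\le\alpha_\text{max}$ furnished by Proposition \ref{prop:armijo}. Invoking the Armijo inequality \eqref{eq:gen_sls_cond} to convert $-\alpha_k g_k(x^k)^Td_k$ into the stochastic decrement $f_k(x^k)-f_k(x^{k+1})$ is what produces the factor $\tfrac{1}{\gamma}$, while bounding the residual step by $\tfrac{1}{\alpha_k}\le\tfrac{1}{\delta\alpha_\text{low}}=\tfrac{c_1^2L_\text{max}}{2c_2\delta(1-\gamma)}$ is what produces the factor $\tfrac{1}{\delta(1-\gamma)}$; together these account for the bracketed part of $\eta$. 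The crucial step is then to pass from the random decrement back to the true optimality gap: since minimizer interpolation guarantees $x^*\in\argmin_x f_k(x)$, we have $f_k(x^{k+1})\ge f_k(x^*)$, and $\mathbb{E}_k[f_k(x^*)]=f(x^*)$, so that $\mathbb{E}_k[f_k(x^k)-f_k(x^{k+1})]\le f(x^k)-f(x^*)$.

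For the linear term the idea is to replace the random step $\alpha_k$ by the deterministic endpoint $\alpha_\text{max}$ and then take conditional expectations, so that Lemma \ref{lemma:bottou} applies and yields $\mathbb{E}_k[d_k]^T\nabla f(x^k)\le -\bigl(c_2-c_3(1-\tfrac1\rho)\bigr)\|\nabla f(x^k)\|^2$. Combining this with the PL inequality $\|\nabla f(x^k)\|^2\ge 2\mu(f(x^k)-f(x^*))$ delivers exactly the negative contribution $-2\bigl(c_2-c_3(1-\tfrac1\rho)\bigr)\mu\,\alpha_\text{max}(f(x^k)-f(x^*))$, i.e.\ the second part of $\eta$. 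Collecting the curvature and linear contributions, taking total expectation, iterating the one-step inequality, and finally using $0<\eta<\tfrac{1}{\alpha_\text{max}}$ gives the geometric decay; the $\mathcal{O}(\log(1/\epsilon))$ iteration count then transfers verbatim to function and gradient evaluations through the uniform backtrack bound $j_k\le j^*$ of Proposition \ref{prop:armijo}.

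The main obstacle I anticipate is the statistical coupling between the step size $\alpha_k$ and the direction $d_k$ (and the gradient $g_k(x^k)$): because $\alpha_k$ is chosen by a line search on the very function $f_k$ that generates $d_k$, it is a nontrivial function of the same randomness, so conditional expectations of products such as $\alpha_k\nabla f(x^k)^Td_k$ cannot be factored, and $d_k$ is guaranteed to be a descent direction only for $f_k$, not for $f$ in every realization. The two devices that make the argument go through are precisely the deterministic two-sided bounds $\delta\alpha_\text{low}\le\alpha_k\le\alpha_\text{max}$, which let me trade the random step against constants according to the sign of the factor it multiplies, and the interpolation identity $\mathbb{E}_k[f_k(x^*)]=f(x^*)$, which is what allows the Armijo-induced $f_k$-decrements to be dominated by the true gap $f(x^k)-f(x^*)$. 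The most delicate point will be the bookkeeping that collects the leading optimality-gap term with the curvature and linear contributions into exactly $\eta\alpha_\text{max}(f(x^k)-f(x^*))$, rather than leaving a spurious additive copy of $f(x^k)-f(x^*)$.
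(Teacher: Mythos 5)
Your overall architecture (descent lemma on $f$; curvature term via Assumption \ref{ass:ass1}, the Armijo condition, and interpolation; Lemma \ref{lemma:bottou} plus PL for the linear term; unrolling; evaluation-complexity transfer via Proposition \ref{prop:armijo}) is the same as the paper's, and your handling of the curvature and leading-gap terms is sound, because there you only trade $\alpha_k$ against quantities that are nonnegative in every realization ($-g_k(x^k)^Td_k$, $f_k(x^k)-f_k(x^*)$, $f(x^k)-f(x^*)$). However, there is a genuine gap at the linear term. Your step ``replace the random step $\alpha_k$ by the deterministic endpoint $\alpha_\text{max}$ and then take conditional expectations'' amounts to claiming $\mathbb{E}_k[\alpha_k\nabla f(x^k)^Td_k]\le\alpha_\text{max}\,\nabla f(x^k)^T\mathbb{E}_k[d_k]$, and this inequality is false in general: replacing $\alpha_k$ by an upper bound inside the expectation requires the factor it multiplies to be pointwise nonnegative, whereas $\nabla f(x^k)^Td_k$ has no deterministic sign --- as you yourself observe, $d_k$ is a descent direction only for $f_k$, and it is only the conditional mean $\nabla f(x^k)^T\mathbb{E}_k[d_k]$ that Lemma \ref{lemma:bottou} makes negative. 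So your stated device, trading $\alpha_k$ against constants ``according to the sign of the factor it multiplies,'' is inapplicable precisely here, because that sign is random. The valid pointwise bound is $\alpha_k t\le \alpha_\text{max}[t]_+-\delta\alpha_\text{low}[t]_-$ with $t=\nabla f(x^k)^Td_k$, $[t]_+=\max\{t,0\}$, $[t]_-=\max\{-t,0\}$; after taking $\mathbb{E}_k$ this leaves the term $(\alpha_\text{max}-\delta\alpha_\text{low})\mathbb{E}_k[[t]_+]$, which Assumptions \ref{ass:ass1}--\ref{ass:ass2} do not control at the needed scale (Cauchy--Schwarz and the SGC only give $\mathbb{E}_k[[t]_+]\le c_1\sqrt{\rho}\|\nabla f(x^k)\|^2$, a positive contribution that destroys the claimed constant $\eta$).

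The paper resolves this coupling with one move your proposal is missing: it divides the descent-lemma inequality by $\alpha_k$ \emph{before} taking any expectation. After division, the linear term is $\nabla f(x^k)^Td_k$ with no step-size factor at all, so $\mathbb{E}_k$ applies to it directly and Lemma \ref{lemma:bottou} together with the PL inequality goes through; every term that still carries an $\alpha_k$ --- $\frac{f(x^{k+1})-f(x^*)}{\alpha_k}$ on the left-hand side, $\frac{f(x^k)-f(x^*)}{\alpha_k}$ on the right-hand side, and $-\alpha_k g_k(x^k)^Td_k$, which the Armijo condition and interpolation convert into $\frac{1}{\gamma}(f_k(x^k)-f_k(x^*))$ --- multiplies a quantity that is nonnegative in every realization, so the deterministic bounds $\delta\alpha_\text{low}\le\alpha_k\le\alpha_\text{max}$ can be applied pointwise ($\alpha_k\le\alpha_\text{max}$ on the left, $\alpha_k\ge\delta\alpha_\text{low}$ on the right). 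This recovers exactly the one-step contraction $\mathbb{E}_k[f(x^{k+1})-f(x^*)]\le\eta\alpha_\text{max}(f(x^k)-f(x^*))$ you were aiming for, with the same constant; the remainder of your argument (total expectation, recursion, and the complexity transfer through the backtrack bound $j_k\le j^*$) then stands as written.
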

\begin{proof}
	By the $L$-smoothness of $f$ we have
	\begin{align*}
		f(x^{k+1}) - f(x^k) &\leq \nabla f(x^k)^T (x^{k+1} - x^k) + \frac{L}{2} \|x^{k+1} - x^k\|^2 \\
		&= \alpha_k \nabla f(x^k)^T d_k + \frac{\alpha_k^2 L}{2} \|d_k\|^2.
	\end{align*}
	Dividing by $\alpha_k > 0$ and applying both conditions from Assumption \ref{ass:ass1} we get
	\begin{align*}
		\frac{f(x^{k+1}) - f(x^k)}{\alpha_k} &\leq \nabla f(x^k)^T d_k + \frac{\alpha_k L}{2} \|d_k\|^2 \\
		& \leq \nabla f(x^k)^T d_k + \frac{\alpha_k L c_1^2}{2} \|g_k (x^k)\|^2 \\
		& \leq \nabla f(x^k)^T d_k - \frac{\alpha_k L c_1^2}{2 c_2} g_k (x^k)^T d_k.
	\end{align*}
	From the stochastic Armijo condition \eqref{eq:gen_sls_cond} we have
	$$-\alpha_k g_k (x^k)^T d_k \leq \frac{f_k(x^k) - f_k (x^{k+1})}{\gamma},$$
	that we can use to obtain
	$$\frac{f(x^{k+1}) - f(x^k)}{\alpha_k}\leq \nabla f(x^k)^T d_k + \frac{L c_1^2}{2 \gamma c_2} (f_k(x^k) - f_k (x^{k+1})).$$
	Let $x^*$ be a global minimizer of $f$. By interpolation we have that $f_k(x^{k+1}) \ge f_k(x^*)$. Therefore,
	$$\frac{f(x^{k+1}) - f(x^k)}{\alpha_k}\leq \nabla f(x^k)^T d_k + \frac{L c_1^2}{2 \gamma c_2} (f_k(x^k) - f_k (x^*)).$$
	By taking the conditional expectation w.r.t.\ $k$ and recalling that $f_k$ is an unbiased estimator of $f$, we have 
	$$\mathbb{E}_k \left[ \frac{f(x^{k+1}) - f(x^k)}{\alpha_k} \right] \leq \nabla f(x^k)^T \mathbb{E}_k[d_k] + \frac{L c_1^2}{2 \gamma c_2} (f (x^k) - f (x^*)).$$
	By Lemma \ref{lemma:bottou}, we can now use \eqref{eq:bottou_conditions_b}, letting $\sigma= (c_2-c_3(1-\frac{1}{\rho}))$, and the PL condition to obtain
	\begin{align*}
		\mathbb{E}_k \left[ \frac{f(x^{k+1}) - f(x^k)}{\alpha_k} \right] & \leq - \sigma \|\nabla f(x^k)\|^2 + \frac{L c_1^2}{2 \gamma c_2} (f(x^k) - f(x^*)) 
		\\
		& \leq - 2 \sigma \mu (f (x^k) - f (x^*))  + \frac{L c_1^2}{2 \gamma c_2} (f(x^k) - f (x^*))
		\\
		& = \left( \frac{L c_1^2}{2 \gamma c_2} - 2 \sigma \mu  \right)(f (x^k) - f (x^*)).
	\end{align*}
	Subtracting on both sides the quantity $\mathbb{E}_k [f(x^*) / \alpha_k]$ and rearranging, we obtain
	$$
	\mathbb{E}_k \left[ \frac{f(x^{k+1}) - f(x^*)}{\alpha_k} \right]
	\leq \mathbb{E}_k \left[ \frac{f(x^k) - f(x^*)}{\alpha_k} \right] + \left( \frac{L c_1^2}{2 \gamma c_2} - 2 \sigma \mu  \right)(f (x^k) - f (x^*)).
	$$
	From Proposition \ref{prop:armijo} we know that $\alpha_k\ge \min\{\alpha_0^k,\delta\alpha_\text{low}\} = \alpha_\text{min}$, therefore we have
	\begin{align*}
		\mathbb{E}_k \left[ \frac{f(x^{k+1}) - f(x^*)}{\alpha_k} \right]
		& \leq \frac{(f(x^k) - f(x^*))}{\mathbb{E}_k [\alpha_k]} + \left(\frac{L c_1^2}{2 \gamma c_2} - 2 \sigma \mu  \right)(f (x^k) - f (x^*)) \\
		& \leq  \left(\frac{L c_1^2}{2 \gamma c_2} - 2 \sigma \mu  + \frac{1}{\alpha_\text{min}} \right)(f (x^k) - f (x^*)).
	\end{align*}
	Recalling that $\alpha_k \leq \alpha_{\text{max}}$, that $L \leq L_{\text{max}}$ and taking the total expectation we obtain
	\begin{equation*}
		\mathbb{E} \left[ f(x^{k+1}) - f(x^*) \right]
		\leq  \alpha_{\text{max}} \left(\frac{L_{\text{max}} c_1^2}{2 \gamma c_2} - 2 \sigma \mu  + \frac{1}{\alpha_\text{min}} \right)\mathbb{E}[f (x^k) - f (x^*)].
	\end{equation*}
	We now focus on the quantity $\left(\frac{L_{\text{max}} c_1^2}{2 \gamma c_2} - 2 \sigma \mu  + \frac{1}{\alpha_\text{min}} \right)$.
	Rearranging some terms, and recalling that, by assumption, $\alpha_0^k \ge \delta \alpha_{\text{low}}$ we get
	\begin{multline*}
		\left(\frac{L_{\text{max}} c_1^2}{2 \gamma c_2} - 2 \sigma \mu  + \frac{1}{\alpha_\text{min}} \right)= \left(\frac{L_{\text{max}} c_1^2}{2 \gamma c_2} - 2 \bigg(c_2-c_3\bigg(1-\frac{1}{\rho}\bigg)\bigg) \mu  + \frac{c_1^2 L_{\text{max}}}{2 c_2 \delta (1- \gamma)} \right)\\=
		\left(\frac{L_{\text{max}} c_1^2}{2 c_2}\left(\frac{1}{\gamma}+\frac{1}{\delta(1-\gamma)}\right) - 2 \bigg(c_2-c_3\bigg(1-\frac{1}{\rho}\bigg)\bigg) \mu   \right) = \eta
	\end{multline*}
	Thus, we have
	$$\mathbb{E} \left[ f(x^{k+1}) - f(x^*) \right]
	\leq  \eta\alpha_{\text{max}} \mathbb{E}[f (x^k) - f (x^*)]$$
	Now, we can recursively apply the above inequality from 0 to $k$ to finally obtain \eqref{eq:main_thesis}.
\end{proof}

Of course, many of the constants appearing in the result from Theorem \ref{th:main} will not be known in practice; yet, manipulating the values of $\delta$ and $\gamma$, that are hyperparameters of line search algorithms, it would always be possible to obtain $\eta>0$; then, we also know that, for some suitable values of $\alpha_{\text{max}}$, the linear convergence rate is actually achievable. Interestingly, if we fall back to the simple case $d_k=-g_k(x^k)$, we can substitute $c_1=c_2=c_3=1$, getting $\eta=(\frac{L_{\text{max}}}{2}(\frac{1}{\gamma}+\frac{1}{\delta(1-\gamma)}) - \frac{2 \mu}{\rho} )$ and recovering an analogous result to the one from \cite{galli2023}.

\section{Conclusions}
\label{sec:conc}
In this work, we analyzed an algorithmic framework based on stochastic line searches and general search directions to tackle finite-sum optimization of over-parametrized models. The algorithm is shown to possess a linear convergence rate when applied to PL functions satisfying the interpolation condition, as long as the search directions are guaranteed to satisfy suitable assumptions.
 
In particular, we proved that the key condition for convergence in this setting is for the directions to be related to the current stochastic gradient. From the computational perspective, this property is  much more convenient to handle than other conditions often considered in the literature on SGD-type methods. Indeed, it can be practically checked and thus employed to define safeguarding and restart strategies. The result from this manuscript thus opens the way to a sound integration of stochastic line search and popular momentum-type directions and preconditioning strategies.

In future research, further investigations about the assumption on the covariance between the stochastic gradient and the search direction might be of very significant impact. On the other hand, the implementation and testing of restarting techniques based on the stochastic gradient related conditions would surely be of interest.

\section*{Acknowledgement(s)}
The authors are grateful to the editors and referees involved in the peer review process of this manuscript: their constructive comments helped us improve the overall quality of the paper.

\section*{Disclosure statement}

The authors have no competing interests to declare that are relevant to the content of this article.

\section*{Funding}

No funding was received for conducting this study.

\section*{Data Availability Statement}
Data sharing is not applicable to this article as no new data were created or analyzed in this study.

%

%

%

\bibliographystyle{tfnlm}
\bibliography{interactnlmsample}


\end{document}